\newcommand{\mB}{\mathcal B}
\newcommand{\mH}{\mathcal H}
\newcommand{\mJ}{\mathcal J}
\newcommand{\B}{\mathbb B}
\newcommand{\R}{\mathbb R}
\newcommand{\reals}{\mathbb R}
\newcommand{\T}{\mathbb T}
\newcommand{\epi}{\mathop{\rm epi}\nolimits}
\newcommand{\cl}{\mathop{\rm cl}\nolimits}
\newcommand{\dom}{\mathop{\rm dom}\nolimits}
\newcommand{\li}{\mathop{\rm liminf}}
\newcommand{\ls}{\mathop{\rm limsup}}
\newcommand{\mli}{\mathop{\rm \mu\text{-}liminf}}
\newcommand{\mls}{\mathop{\rm \mu\text{-}limsup}}
\newcommand{\interior}{\mathop{\rm int}}
\newcommand{\rinterior}{\mathop{\rm rint}}
\newcommand{\esssup}{\mathop{\rm esssup}}
\newcommand{\supp}{\mathop{\rm supp}}
\def\keywords{\vspace{.5em}
{\textit{Keywords}:\,\relax%
}}
\newtheorem{theorem}{Theorem}
\newtheorem{lemma}{Lemma}
\newtheorem{corollary}{Corollary}
\newtheorem{proposition}{Proposition}
\newtheorem{example}{Example}
\newtheorem{remark}{Remark}
\begin{document}
\title{Continuous essential selections and integral functionals} 

\author{Ari-Pekka Perkki\"o \footnote{Department of Mathematics and Systems Analysis, Aalto University, P.O. Box 11100, FI-00076 Aalto, Finland, \href{mailto:ari-pekka.perkkio@aalto.fi}{ari-pekka.perkkio@aalto.fi}}}


\maketitle
\begin{abstract}
Given a strictly positive measure, we characterize inner semicontinuous solid convex-valued mappings for which continuous functions which are selections almost everywhere are selections. This class contains continuous mappings as well as fully lower semicontinuous closed convex-valued mappings that arise in variational analysis and optimization of integral functionals. The characterization allows for extending existing results on convex conjugates of integral functionals on continuous functions. We also give an application to integral functionals on left continuous functions of bounded variation.
\keywords{Set-valued and variational analysis \and Continuous selections \and Integral functionals \and Convex duality}
\end{abstract}
\section{Introduction}
Given a set-valued mapping $\Gamma$ from a topological space $\T$ to another $X$ and a strictly positive\footnote{$\mu$ is strictly positive if $\mu(O)>0$ for every nonempty open $O$.} countably additive Borel measure $\mu$ on $\T$, we say that a function $y:\T\rightarrow X$ is an {\em essential selection} of $\Gamma$ if $y_t\in\Gamma_t$ $\mu$-almost everywhere. In this article we study in which situation continuous essential selections are selections. This is important when deriving formulae for convex conjugates of integral functionals on continuous functions. Such conjugates arise in several areas of variational analysis and optimization including optimal control, plasticity theory and mathematical finance, see \cite{afp00,abm5,bv88,ks9,roc76b} and references therein. 

A function, which belongs to the domain of an integral functional, is an essential selection of the domain of the integrand. However, without further conditions such function is not necessarily a selection of the domain. The articles \cite{bv88,roc71} study convex conjugates of integral functionals on continuous functions. In \cite[Theorem 5]{roc71} Rockafellar assumes full lower semicontinuity, which implies that continuous essential selections are selections of the closure of the domain, whereas in \cite[Example 5]{bv88} this property was assumed explicitly.

We show that, for an inner semicontinuous solid convex-valued mapping, continuous essential selections are selections if and only if the mapping has a property which we call {\em outer regularity in measure}. While full lower semicontinuity is a purely topological notion, outer regularity in measure takes into account the underlying measure. When $\T\subset \R^n$, we prove that fully lower semicontinuous closed convex-valued mappings are outer regular in measure which allows for a generalization \cite[Theorem 5]{roc71} about convex conjugates of integral functionals on continuous functions.  We also give an application to integral functionals on left continuous functions of bounded variation. These results have further applications to problems of Bolza; see \cite{pp13}.

\section{Outer regularity in measure}\label{sec:outerregularity}
We denote by $\mH_t$ and $\mH_x$ the neighborhood systems of $t\in\T$ and $x\in X$, respectively. We will denote by $\mH^o_x$ the system of open neighborhoods of $x\in X$. The interior and closure of a set $A$ will be denoted by $\interior A$ and $\cl A$. 

Let $\Gamma:\T\rightrightarrows X$ be a set-valued mapping. The {\em outer limit} and {\em inner limit} are, respectively,
\begin{align}
\begin{split}\label{eq:lsli}
	(\ls \Gamma)_t &=\bigcap_{B\in\mH_t}\cl\left(\bigcup_{t'\in B} \Gamma_{t'}\right),\\
	(\li \Gamma)_t &=\bigcap_{B\in\mH_t^\#}\cl\left(\bigcup_{t'\in B} \Gamma_{t'}\right),
\end{split}
\end{align}
where $\mH_t^\#=\{B\subset \T\mid  B\cap O\neq\emptyset\ \forall\, O\in\mH_t\}$ is the {\em grill} of $\mH_t$; see \cite{bw96}. The mapping $\Gamma$ is {\em outer semicontinuous} or {\em inner semicontinuous}, respectively, if 
\begin{align*}
	(\ls \Gamma)_t \subset \Gamma_t \quad&\forall t,\\
	\Gamma_t \subset (\li \Gamma)_t \quad&\forall t.
\end{align*}
We refer to \cite[Chapter 5]{rw98} for a systematic treatment of these concepts in the finite dimensional case. The above limits can also be expressed as
\begin{equation}\label{eq:liminf}
\begin{split}
	(\ls\Gamma)_t &=\left\{x\in X\mid \Gamma^{-1}(A)\in\mH_t^{\#}\ \forall A\in\mH_x^o\right\},\\
	(\li\Gamma)_t &=\left\{x\in X\mid \Gamma^{-1}(A)\in\mH_t\ \forall A\in\mH_x^o\right\},
\end{split}
\end{equation}
where $\Gamma^{-1}(A)=\{t\mid \Gamma_t\cap A\neq\emptyset\}$. In \cite{bw96} this was stated for a metric space $X$; for general $X$, \eqref{eq:liminf} can be verified analogously to the proof of Lemma~\ref{lem:mlim1} below. Moreover, $\Gamma$ is inner semicontinuous if and only if $\Gamma^{-1}(A)$ is an open set for every open $A\subset X$ \cite[Proposition 2.1]{mic56}. This is taken as the definition of lower semicontinuity in \cite{mic56}.

Given a strictly positive countably additive measure $\mu$ on the Borel $\sigma$-algebra $\mB(\T)$, we define $\mH_t^\mu=\{B\in\mathcal B(\T)\mid \exists O\in\mH_t: \mu(B\cap O)=\mu(O)\}$ and
\begin{align*}
	(\mli \Gamma)_t&=\bigcap_{B\in\mH_t^{\mu\#}}\cl\left(\bigcup_{t'\in B} \Gamma_{t'}\right),
\end{align*}
where $\mH_t^{\mu\#}=\{B\in\mB(\T)\mid B\cap O\neq\emptyset\ \forall O\in \mH^\mu_t\}$. It is easily verified that
\[
	\mH_t^{\mu\#}=\left\{B\in\mB(\T)\mid \mu(B\cap O)>0\ \forall O\in \mH_t\right\}.
\]
We have $\mH_t\subset \mH_t^{\mu\#}\subset \mH_t^\#$ and 
\begin{equation}
(\li \Gamma)_t\subseteq (\mli \Gamma)_t \subseteq (\ls \Gamma)_t.\label{eq:inc}
\end{equation}
All these limits are closed and they are invariant under the image closure. Here the {\em image closure} of $\Gamma$ is defined as $\cl\Gamma_t$ for every $t$. Moreover, $\mli$ is invariant under equivalent changes of measure.
\begin{example}
Assume that $X$ is Hausdorff and that $w:\T\rightarrow X$ has a {\em continuous modification}, i.e., there is a continuous $y:\T\rightarrow X$ such that $y_t=w_t$ $\mu$-almost everywhere. Since $y$ is continuous, we have $\li y_t=\ls y_t$ so that by \eqref{eq:inc}, $\mli y_t= y_t$ (we identify $y$ with $t\mapsto \{y_t\}$). By Lemma~\ref{lem:mlim0} below, we get $\mli w_t= y_t$.
\end{example}

We say that $\Gamma$ is {\em outer regular in measure} or {\em outer $\mu$-regular} if 
\begin{align*}
	(\mli \Gamma)_t &\subseteq \cl\Gamma_t\quad\forall t.
\end{align*} 
Outer regularity in measure is invariant under the image closure and under equivalent changes of measure.  The sets of continuous selections and continuous essential selections of $\Gamma$ will be denoted by $C(\T;\Gamma)$ and $C(\T,\mu;\Gamma)$,  respectively.
\begin{theorem}\label{thm:mlim0}
If $\Gamma$ is closed-valued and outer $\mu$-regular, then 
\[
C(\T,\mu;\Gamma) = C(\T;\Gamma).
\]
\end{theorem}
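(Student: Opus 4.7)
The plan is to fix a continuous essential selection $y\in C(\T,\mu;\Gamma)$ and show $y_t\in\Gamma_t$ for every $t$. Since $\Gamma$ is closed-valued and outer $\mu$-regular, we have $(\mli\Gamma)_t\subseteq\cl\Gamma_t=\Gamma_t$, so it suffices to prove $y_t\in(\mli\Gamma)_t$ for all $t$. This is the natural reduction: the definition of $\mli$ is expressly designed to be the $\mu$-almost-everywhere analogue of $\li$, and almost-everywhere information is exactly what an essential selection gives.

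Fix a measurable null set $N\in\mB(\T)$ with $\mu(N)=0$ containing $\{t\mid y_t\notin\Gamma_t\}$; such a set exists by the very definition of "essential selection". Fix $t\in\T$ and take an arbitrary $B\in\mH_t^{\mu\#}$. To show $y_t\in\cl\bigl(\bigcup_{t'\in B}\Gamma_{t'}\bigr)$, let $V\in\mH_{y_t}^o$. The continuity of $y$ gives $y^{-1}(V)\in\mH_t$, hence by the equivalent description of the grill recorded in the excerpt, $\mu(B\cap y^{-1}(V))>0$. Subtracting the null set $N$ we get $\mu(B\cap y^{-1}(V)\setminus N)>0$, so this set is nonempty; any point $t'$ in it satisfies $t'\in B$, $y_{t'}\in V$, and $y_{t'}\in\Gamma_{t'}$, so $V\cap\Gamma_{t'}\neq\emptyset$. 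This produces a point of $V\cap\bigcup_{t'\in B}\Gamma_{t'}$, establishing $y_t\in\cl\bigl(\bigcup_{t'\in B}\Gamma_{t'}\bigr)$. Intersecting over $B\in\mH_t^{\mu\#}$ yields $y_t\in(\mli\Gamma)_t$, and outer $\mu$-regularity closes the argument.

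There is no real obstacle here once the bookkeeping is set up; the proof is essentially a one-line unwinding of definitions, and the content of the theorem lies entirely in the definition of outer $\mu$-regularity and of $\mH_t^{\mu\#}$. The only subtlety worth noting is that one must apply the grill condition to the \emph{measurable} set $B\cap y^{-1}(V)\setminus N$, which is why one fixes a measurable null set $N$ at the outset rather than working with the (possibly non-measurable) set of points where $y$ fails to select. The reverse inclusion $C(\T;\Gamma)\subseteq C(\T,\mu;\Gamma)$ is trivial and requires neither closedness nor outer $\mu$-regularity.
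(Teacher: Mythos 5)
Your proposal is correct and follows essentially the same route as the paper: fix a measurable null set $N$ containing the exceptional set, show $y_t\in(\mli\Gamma)_t$ for each $t$, and conclude via outer $\mu$-regularity and closedness. The only difference is cosmetic: where the paper extracts a sequence $t^\nu\to t$ in $B\setminus N$ and invokes continuity, you verify $y_t\in\cl\bigl(\bigcup_{t'\in B}\Gamma_{t'}\bigr)$ directly by testing against neighborhoods $V$ of $y_t$ and using $\mu(B\cap y^{-1}(V)\setminus N)>0$ --- which is if anything slightly cleaner, since it avoids any implicit appeal to first countability of $\T$.
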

\begin{proof}
Let $y\in C(\T,\mu;\Gamma)$, and let $N\in\mB(\T)$ be a $\mu$-null set such that $\{t'\mid y_{t'}\notin\Gamma_{t'}\}\subset N$. Let $B\in\mH_t^{\mu\#}$. Since $(B\backslash N)\cap O_t\neq\emptyset$ for every $O_t\in\mH_t$, there is a sequence $(t^\nu)_{\nu=1}^\infty$ in $B\backslash N$ such that $t^\nu\rightarrow t$. Since $y_{t^\nu}\in\Gamma_{t^\nu}$ for every $\nu$, and since $y$ is continuous, we get $y_t\in \cl(\bigcup_{t'\in B} \Gamma_{t'})$. Because this holds for every $B\in\mH_t^{\mu\#}$, we have $y_t\in (\mli \Gamma)_t$. Since $\Gamma$ is outer $\mu$-regular, we get $y\in C(\T;\Gamma)$. The opposite inclusion $C(\T;\Gamma)\subseteq C(\T,\mu;\Gamma)$ is trivial.
\end{proof}
\begin{remark}
Theorem~\ref{thm:mlim0} and the other results below hold for non-closed mappings with appropriate reformulations using image closures.
\end{remark}
Section~\ref{sec:properties} will be concerned with properties of $\mli$ which are applied in Section~\ref{sec:necessity} to prove a converse of Theorem~\ref{thm:mlim0}. Our aim is to show that outer $\mu$-regularity is also a necessary condition in Theorem~\ref{thm:mlim0}.
\begin{remark}
Analogously to \eqref{eq:lsli}, one can define
\begin{align*}
	(\mls\Gamma)_t&=\bigcap_{B\in\mH_t^{\mu}}\cl\left(\bigcup_{t'\in B} \Gamma_{t'}\right).
\end{align*}
However, we do not analyze this limit concept here. 
\end{remark}

\subsection{Properties of $\mli$}\label{sec:properties}

The following result is comparable with \eqref{eq:liminf}. Recall that $\Gamma$ is {\em measurable} if $\Gamma^{-1}(A)\in\mathcal B(\T)$ for every open $A\subseteq X$. 
\begin{lemma}\label{lem:mlim1}
If $\Gamma$ is measurable, then
\begin{align*}
	(\mli\Gamma)_t &=\left\{x\in X\mid \Gamma^{-1}(A)\in\mH_t^\mu\ \forall A\in\mH_x^o\right\}.
\end{align*}
\end{lemma}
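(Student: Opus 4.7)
The plan is to prove the two set inclusions separately, with the main work being a careful application of the definitions of $\mH_t^\mu$ and $\mH_t^{\mu\#}$ as complementary (in a measure-theoretic Borel sense) neighborhood systems. The measurability hypothesis on $\Gamma$ enters precisely to guarantee that $\Gamma^{-1}(A)$ and its complement are Borel sets, so they can be members of these systems.

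For the forward inclusion $(\mli\Gamma)_t \subseteq \{x \mid \Gamma^{-1}(A)\in\mH_t^\mu\ \forall A\in\mH_x^o\}$, I would argue by contraposition. Suppose there exists $A\in\mH_x^o$ with $\Gamma^{-1}(A)\notin\mH_t^\mu$, i.e., $\mu(\Gamma^{-1}(A)\cap O)<\mu(O)$ for every $O\in\mH_t$. Set $B:=\T\setminus\Gamma^{-1}(A)$; this is Borel by measurability of $\Gamma$, and the above inequality rearranges to $\mu(B\cap O)>0$ for every $O\in\mH_t$, so $B\in\mH_t^{\mu\#}$. But for every $t'\in B$ we have $\Gamma_{t'}\cap A=\emptyset$, giving $\bigcup_{t'\in B}\Gamma_{t'}\subseteq X\setminus A$, a closed set not containing $x$. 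Hence $x\notin\cl(\bigcup_{t'\in B}\Gamma_{t'})$ and so $x\notin(\mli\Gamma)_t$.

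For the reverse inclusion, assume $\Gamma^{-1}(A)\in\mH_t^\mu$ for every $A\in\mH_x^o$, and fix an arbitrary $B\in\mH_t^{\mu\#}$. To show $x\in\cl(\bigcup_{t'\in B}\Gamma_{t'})$, it suffices to show that every $A\in\mH_x^o$ meets $\bigcup_{t'\in B}\Gamma_{t'}$. Given such $A$, pick $O\in\mH_t$ with $\mu(\Gamma^{-1}(A)\cap O)=\mu(O)$; since $B\in\mH_t^{\mu\#}$ we have $\mu(B\cap O)>0$, and the set $O\setminus\Gamma^{-1}(A)$ has $\mu$-measure zero, so $\mu(B\cap O\cap\Gamma^{-1}(A))=\mu(B\cap O)>0$. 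In particular $B\cap\Gamma^{-1}(A)$ is nonempty, yielding some $t'\in B$ with $\Gamma_{t'}\cap A\neq\emptyset$, as desired.

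The only step that requires any care is the measurability argument in the forward direction (to ensure $B$ is Borel so as to lie in $\mH_t^{\mu\#}$); the rest is bookkeeping of measures. No obstacle is expected beyond keeping track of the definitions.
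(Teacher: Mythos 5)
Your proof is correct and follows essentially the same route as the paper's: the forward inclusion by contraposition, exhibiting $(\Gamma^{-1}(A))^C$ as a member of $\mH_t^{\mu\#}$ whose union of values misses $A$, and the reverse inclusion by checking directly that every $B\in\mH_t^{\mu\#}$ must meet $\Gamma^{-1}(A)$ in positive measure. You merely spell out the measure bookkeeping that the paper leaves implicit.
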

\begin{proof}
Assuming that there is an $A\in\mH_x^o$ such that $\mu(O_t\cap\Gamma^{-1}(A))<\mu(O_t)$ for all $O_t\in\mH_t$, we have $(\Gamma^{-1}(A))^C\in\mH_t^{\mu\#}$, which implies $x\notin (\mli \Gamma)_t$, because $x\notin\cl (\bigcup_{t\in (\Gamma^{-1}(A))^C}\Gamma_t)$. 

Assuming that $\Gamma^{-1}(A)\in\mH_t^\mu$ for all $A\in\mH_x^o$, we have $\cl(\bigcup_{t\in B}\Gamma_t)\cap A\neq\emptyset$ for all \mbox{$B\in\mH_t^{\mu\#}$} and $A\in\mH_x^o$, which implies 
\begin{align*}
 	\{x\}\subseteq\bigcap_{B\in\mH_t^{\mu\#}}\left[\bigcap_{A\in\mH_x^o}\cl\left(\bigcup_{t\in B}\Gamma_t\right)\cap A\right] \subset(\mli\Gamma)_t.
\end{align*}
\end{proof}
The following lemma shows that $\mli\Gamma$ is invariant under changes of $\Gamma$ on $\mu$-null sets.
\begin{lemma}\label{lem:mlim0}
If $\Gamma:\T\rightrightarrows X$ and $\tilde \Gamma:\T\rightrightarrows X$ are closed-valued and $\Gamma_t=\tilde \Gamma_t$ $\mu$-a.e., then 
\[
\mli\Gamma_t=\mli\tilde\Gamma_t\quad\forall t.
\]
\end{lemma}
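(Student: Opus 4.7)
The plan is a symmetric double-inclusion argument based on a single observation: the family $\mH_t^{\mu\#}$ is stable under removal of $\mu$-null sets. Fix $t\in\T$ and pick a $\mu$-null Borel set $N\in\mB(\T)$ such that $\Gamma_{t'}=\tilde\Gamma_{t'}$ for every $t'\notin N$ (if the exceptional set is only null in the $\mu$-completion, replace it by a Borel null superset).

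First I would verify that $B\in\mH_t^{\mu\#}$ implies $B\setminus N\in\mH_t^{\mu\#}$. Using the characterization $\mH_t^{\mu\#}=\{B\in\mB(\T)\mid \mu(B\cap O)>0\ \forall O\in\mH_t\}$ stated just before the lemma, we have, for every $O\in\mH_t$,
\[
\mu((B\setminus N)\cap O)=\mu(B\cap O)-\mu(B\cap O\cap N)=\mu(B\cap O)>0,
\]
so indeed $B\setminus N\in\mH_t^{\mu\#}$.

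The key step is then the following chain, valid for every $B\in\mH_t^{\mu\#}$:
\[
\cl\Bigl(\bigcup_{t'\in B}\Gamma_{t'}\Bigr)\supseteq \cl\Bigl(\bigcup_{t'\in B\setminus N}\Gamma_{t'}\Bigr)=\cl\Bigl(\bigcup_{t'\in B\setminus N}\tilde\Gamma_{t'}\Bigr)\supseteq (\mli\tilde\Gamma)_t,
\]
where the equality uses $\Gamma_{t'}=\tilde\Gamma_{t'}$ off $N$, and the last inclusion uses that $B\setminus N\in\mH_t^{\mu\#}$, so that the corresponding closure appears as one of the terms in the intersection defining $(\mli\tilde\Gamma)_t$. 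Intersecting the resulting inclusion over $B\in\mH_t^{\mu\#}$ yields $(\mli\Gamma)_t\supseteq(\mli\tilde\Gamma)_t$, and swapping the roles of $\Gamma$ and $\tilde\Gamma$ gives the reverse inclusion.

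I do not expect any real obstacle: closed-valuedness is not essential to the argument (closures absorb it), and the only bookkeeping issue is guaranteeing that $N$ can be taken Borel, which is immediate by covering the set of disagreement by a Borel null superset. The content of the lemma is really just the null-set stability of $\mH_t^{\mu\#}$, which is where the strict positivity of $\mu$ plays no role — only additivity on the complement of a null set is used.
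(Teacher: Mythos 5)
Your proof is correct and follows essentially the same route as the paper's: both reduce to the observation that $B\in\mH_t^{\mu\#}$ implies $B\setminus N=B\cap N^C\in\mH_t^{\mu\#}$ for a Borel null set $N$, and then compare the closures $\cl\bigl(\bigcup_{t'\in B}\Gamma_{t'}\bigr)$ and $\cl\bigl(\bigcup_{t'\in B\cap N^C}\Gamma_{t'}\bigr)$ inside the defining intersection. Your write-up is if anything slightly cleaner, since you only use the one direction of the equivalence that is actually needed and make the symmetry of the two inclusions explicit.
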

\begin{proof}
There is a Borel $\mu$-null set $N$ such that $\Gamma_t=\tilde \Gamma_t$ for every $t\in N^C$. Since, for all \mbox{$B\in\mB(\T)$,} we have that $B\cap N^C\in\mH_t^{\mu\#}$ if and only if $B\in\mathcal H_t^{\mu\#}$, we get
\begin{align*}
	\bigcap_{B\in\mH_t^{\mu\#}}\cl\left(\bigcup_{t'\in B} \Gamma_{t'}\right) &\subseteq\bigcap_{B\in\mH_t^{\mu\#}}\left\{\cl\left(\bigcup_{t'\in B\cap N^C} \Gamma_{t'}\right)\mid B\cap N=\emptyset\right\}\\
	&=\bigcap_{B\in\mH_t^{\mu\#}}\cl\left(\bigcup_{t'\in B\cap N^C} \Gamma_{t'}\right)\\
	&\subseteq \bigcap_{B\in\mH_t^{\mu\#}}\cl\left(\bigcup_{t'\in B} \Gamma_{t'}\right),
\end{align*}
which gives 
\[
\mli \Gamma_t= \bigcap_{B\in\mH_t^{\mu\#}}\cl\left(\bigcup_{t'\in B\cap N^C} \Gamma_{t'}\right).
\]
This implies that $\mli \Gamma_t= \mli \tilde \Gamma_t$ for all $t$.
\end{proof}

Recall that $\T$ is {\em Lindel\"of} if every open cover of $\T$ has a countable subcover, and that $\T$ is {\em strongly Lindel\"of} if every subspace of $\T$ is Lindel\"of. When $X$ is a normed space, $\B(x,r)$ denotes the open ball with center $x$ and radius $r$, and $d(A,B)=\inf\{\|x-x'\|\mid x\in A,\, x'\in B\}$ denotes the distance between two sets $A$ and $B$. A convex-valued $\Gamma$ is {\em solid} if it is closed-valued and $\interior\Gamma_t\neq\emptyset$ for all $t$.

\begin{proposition}\label{prop:mli}
Assume that $\T$ is strongly Lindel\"of, $X=\reals^d$ and that $\Gamma$ is measurable and closed-valued.
\begin{enumerate}
\item[{\rm(A)}] If $\Gamma$ is convex-valued, then $t\mapsto\mli\Gamma_t$ is convex-valued.
\item[{\rm(B)}] If $\Gamma$ is inner semicontinuous solid convex-valued, then $\mli\Gamma$ is inner semicontinuous solid convex-valued, $\mli\Gamma_t=\mli\Gamma_t$ $\mu$-a.e. and 
\[
\mli(\mli\Gamma)=\mli\Gamma.
\]
\end{enumerate}
\end{proposition}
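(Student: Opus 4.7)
The plan is to combine Lemma~\ref{lem:mlim1} with the countable ball basis of $\R^d$ and the convex structure of the values throughout.

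For part~(A), fix $t$ and take $x_1, x_2 \in (\mli\Gamma)_t$, $\lambda \in [0,1]$, $x = \lambda x_1 + (1-\lambda)x_2$. Given any $A \in \mH_x^o$, choose $\epsilon > 0$ with $\B(x,\epsilon) \subseteq A$; since $\lambda\B(x_1,\epsilon) + (1-\lambda)\B(x_2,\epsilon) \subseteq \B(x,\epsilon)$, Lemma~\ref{lem:mlim1} yields open $O_i \ni t$ with $\mu(O_i \setminus \Gamma^{-1}(\B(x_i,\epsilon))) = 0$, and convexity of $\Gamma_{t'}$ on $O_1 \cap O_2$ forces $\Gamma_{t'} \cap A \neq \emptyset$ outside a $\mu$-null set; hence $\Gamma^{-1}(A) \in \mH_t^\mu$ and $x \in (\mli\Gamma)_t$.

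For part~(B), convexity comes from (A), closedness of $(\mli\Gamma)_t$ is by construction, and solidness follows from the inclusion $\Gamma_t \subseteq (\mli\Gamma)_t$, which is inner semicontinuity of $\Gamma$ combined with \eqref{eq:inc}. The heart of part~(B) is the following local property: \emph{if $x \in \interior(\mli\Gamma)_t$, then there exist an open $O \ni t$ and $\delta > 0$ with $\B(x,\delta) \subseteq (\mli\Gamma)_{t'}$ for every $t' \in O$.} To prove it, pick affinely independent $z_0,\dots,z_d \in (\mli\Gamma)_t$ with $\B(x,2\delta) \subseteq \mathrm{conv}\{z_0,\dots,z_d\}$, and $\epsilon > 0$ small enough that $\mathrm{conv}\{z_0',\dots,z_d'\} \supseteq \B(x,\delta)$ whenever $z_i' \in \B(z_i,\epsilon)$ for each $i$. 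Lemma~\ref{lem:mlim1} places each $\Gamma^{-1}(\B(z_i,\epsilon))$ in $\mH_t^\mu$, so a finite intersection of neighborhoods yields an open $O \ni t$ on which all these preimages have full measure; convexity of $\Gamma_{t''}$ upgrades this to $\B(x,\delta) \subseteq \Gamma_{t''}$ for $\mu$-a.e.\ $t'' \in O$. For any $t' \in O$ and $z \in \B(x,\delta)$, this uniform inclusion makes $\Gamma^{-1}(V)$ have full measure on $O$ for every open $V \ni z$, so $z \in (\mli\Gamma)_{t'}$ by Lemma~\ref{lem:mlim1}. Inner semicontinuity at a general $y \in (\mli\Gamma)_t$ then follows by solidness: approximate $y$ by a sequence $y_n \in \interior(\mli\Gamma)_t$ with $y_n \to y$, and apply the local property to any $y_n$ lying in the given open $A \ni y$.

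For the $\mu$-a.e.\ identity $\Gamma_t = (\mli\Gamma)_t$, enumerate $\mathbb{Q}^d = \{q_n\}$ and set $W_{n,k} = \Gamma^{-1}(\B(q_n,1/k))$, which are open by inner semicontinuity of $\Gamma$. Define $N_{n,k} = \{t \in W_{n,k}^c : W_{n,k} \in \mH_t^\mu\}$. Every $t \in N_{n,k}$ has an open $O_t \ni t$ with $\mu(O_t \cap W_{n,k}^c) = 0$; the strongly Lindel\"of property extracts a countable subcover of $N_{n,k}$, and a union bound gives $\mu(N_{n,k}) = 0$. For $t \notin N := \bigcup_{n,k} N_{n,k}$ and $q_n \in (\mli\Gamma)_t$, Lemma~\ref{lem:mlim1} forces $W_{n,k} \in \mH_t^\mu$ for every $k$, and $t \notin N_{n,k}$ then gives $t \in W_{n,k}$, hence $q_n \in \Gamma_t$. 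Solidness and convexity of $(\mli\Gamma)_t$ make the rationals in $\interior(\mli\Gamma)_t$ dense in $(\mli\Gamma)_t$, and these all lie in the closed set $\Gamma_t$, so $(\mli\Gamma)_t \subseteq \Gamma_t$ off $N$; the reverse inclusion is $\Gamma \subseteq \mli\Gamma$. The idempotency $\mli(\mli\Gamma) = \mli\Gamma$ is then immediate from Lemma~\ref{lem:mlim0}, since $\mli\Gamma$ and $\Gamma$ coincide $\mu$-a.e. The main obstacle is the key local property behind inner semicontinuity, where the finite-dimensionality of $X = \R^d$ and the solid convex structure are essential to upgrade the measure-theoretic conclusion of Lemma~\ref{lem:mlim1} to a uniform ball inclusion inside the values of $\Gamma$.
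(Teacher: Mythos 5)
Your proof is correct and follows essentially the same route as the paper's: Lemma~\ref{lem:mlim1} plus convex combinations of witnesses for (A), a perturbation-stable finite configuration of points (your simplex versus the paper's points $\bar x\pm\epsilon x^i$) for inner semicontinuity, a countable base combined with the strongly Lindel\"of property for the $\mu$-a.e.\ identity, and Lemma~\ref{lem:mlim0} for idempotency. The only cosmetic difference is that you establish $\mli\Gamma_t\subseteq\Gamma_t$ off a null set at rational points and then invoke density via solidness, whereas the paper's null-set construction handles all points directly.
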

\begin{proof}
Let $(A^\nu)_{\nu=1}^\infty$ be a countable open base for the topology on $X$. For every $\nu$ and $t$ with $t\in(\mli\Gamma)^{-1}(A^\nu)$, there is, by Lemma~\ref{lem:mlim1}, an open $O^\nu_t\in\mH_t$ and a $\mu$-null set $N_t^\nu\in\mB(\T)$ such that $O^\nu_t\backslash\Gamma^{-1}(A^\nu)=N^\nu_t$. Since $\T$ is strongly Lindel\"of, there is a countable $\mJ^\nu$ (here $\mJ^\nu=\emptyset$ if $(\mli\Gamma)^{-1}(A^\nu)=\emptyset$) for which $(\mli\Gamma)^{-1}(A^\nu)\subset\bigcup_{t\in\mJ^\nu}O^\nu_t$. Let 
\[
N=\bigcup_{\nu,t\in\mathcal J^\nu}N^\nu_t,
\]
which is a $\mu$-null-set.

Let $t'$ be such that $\mli\Gamma_{t'}\not\subseteq\Gamma_{t'}$. There is an $A^\nu$ such that $\mli\Gamma_{t'}\cap A^\nu\neq\emptyset$ but $\Gamma_{t'}\cap A^\nu=\emptyset$. Choose $t\in\mathcal J^\nu$ such that $t'\in O^\nu_t$. Since $O^\nu_t\backslash\Gamma^{-1}(A^\nu)=N^\nu_t$ and since $t'\notin\Gamma^{-1}(A^\nu)$, we have $t'\in N$. Thus $\mli\Gamma_t\subseteq\Gamma_t$ for all $t\in N^C$.

Assume now that $\Gamma$ is convex-valued. Let $x^1,x^2\in(\mli\Gamma)_{t}$ and $\alpha\in[0,1]$, and denote $\bar x=\alpha x^1+(1-\alpha)x^2$. Let $A\in\mH_{\bar x}^o$ be convex. By Lemma~\ref{lem:mlim1} and the construction of $N$, there is $O^1_t,O^2_t\in\mH_t$ such that $\Gamma^{-1}(A-\bar x + x^1)=O^1_t\backslash N$ and $\Gamma^{-1}(A-\bar x + x^2)=O^2_t\backslash N$. For any $t'\in O^2_t\cap O^1_t\backslash N$, there is an $x^1_{t'}\in\Gamma_{t'}\cap (A-\bar x+x^1)$ and an $x^2_{t'}\in\Gamma_{t'}\cap (A-\bar x+x^2)$ so that $\alpha x^1_{t'}+(1-\alpha)x^2_{t'}\in\Gamma_{t'}\cap A$, which implies $\Gamma^{-1}(A)\in\mH^\mu_{t}$. Since this holds for every convex $A\in\mH_{\bar x}^o$, and since every element of $\mH_{\bar x}^o$ contains some convex $A\in\mH_{\bar x}^o$, we get, by Lemma~\ref{lem:mlim1}, that $\bar x\in\mli\Gamma_t$.

Assume now that $\Gamma$ is inner semicontinuous solid convex-valued. By inner semicontinuity of $\Gamma$, \eqref{eq:inc} and by $\mli\Gamma_t\subseteq\Gamma_t$ for all $t\in N^C$ , we get 
\begin{align}\label{eq2:N}
\mli\Gamma_t=\Gamma_t\quad \forall t\in N^C.
\end{align}
Thus, by Lemma~\ref{lem:mlim0}, we have $\mli(\mli\Gamma)=\mli\Gamma$.

By \eqref{eq:inc}, $\interior\mli\Gamma_t\neq\emptyset$ for all $t$. Since $\mli\Gamma$ is convex-valued, to prove inner semicontinuity of $\mli\Gamma$, it suffices to show that $\bar x\in \li\mli\Gamma_{\bar t}$ whenever $\bar x\in\interior\mli\Gamma_{\bar t}$. Let $(x^i)_{i=1}^d$ be an orthonormal basis in $\R^d$ and $\epsilon>0$ be such that $\bar x\pm\epsilon x^i\in\mli\Gamma_{\bar t}$ for all $i$. Since there are finitely many $x^i$, there is, by Lemma~\ref{lem:mlim1}, an open $O_{\bar t}\in\mH_{\bar t}$ such that $\Gamma_t\cap\B(\bar x\pm\epsilon x^i,\epsilon/2)\neq\emptyset$ $\mu$-a.e. on $O_{\bar t}$ for all $x^i$. Since $\Gamma$ is convex-valued, we have $\bar x\in\Gamma_t$ $\mu$-a.e. on $O_{\bar t}$. Therefore, by Lemma~\ref{lem:mlim1}, $\bar x\in\mli\Gamma_t$ for all $t\in O_{\bar t}$ and consequently $\bar x\in\li\mli\Gamma_{\bar t}$.
\end{proof}
The following example illustrates that, without convexity, inner semicontinuity is not necessarily preserved under $\mli$.
\begin{example}\label{ex:nisc}
Let $\T=[0,1]$ be equipped with the standard topology, $\mu$ be the Lebesque measure and let $\tilde\Gamma_t= \{2^{-n}\}$ for $t\in (2^{-(n+1)},2^{-n})$ and $n\in\mathbb N$. Let $\Gamma_t=\tilde\Gamma_t\cup[1,2]$ so that $\Gamma$ is inner semicontinuous closed nonempty-valued. It follows from Lemma~\ref{lem:mlim1} that $\mli\Gamma_0=\{0\}\cup[1,2]$ but $\mli \Gamma_{t}=[1,2]$ whenever $t=2^{-n}$ for some $n\in\mathbb N$. In particular, $t\mapsto\mli \Gamma_t$ is not inner semicontinuous at the origin.
\end{example}

A closed convex $\R^d$-valued $\Gamma$ is {\em fully lower semicontinuous} if it is inner semicontinuous solid-valued and $x\in\cl\Gamma_t$ whenever there exist $A\in\mH_x$ and $O\in\mH_t$ such that $\{t\in O\mid A\subset \Gamma_t\}$ is dense in~$O$. In \cite[Theorem 5]{roc71} Rockafellar uses full lower semicontinuity to prove an explicit expression for the convex conjugate of an integral functional on continuous functions.
\begin{lemma}\label{lem:fisc}
Assume that $\T\subseteq\R^n$. If $\Gamma$ is closed convex $\R^d$-valued and fully lower semicontinuous, then $\Gamma$ is outer regular in measure. 
\end{lemma}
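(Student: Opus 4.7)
The plan is to show $(\mli \Gamma)_t \subseteq \Gamma_t$ for every $t$, reducing first to interior points. Since inner semicontinuity makes $\Gamma^{-1}(A)$ open, hence Borel, for every open $A$, the mapping $\Gamma$ is measurable, so Proposition~\ref{prop:mli}(A) gives convexity of $(\mli \Gamma)_t$. By \eqref{eq:inc} and inner semicontinuity, $\Gamma_t \subseteq (\mli \Gamma)_t$, and solidity of $\Gamma$ then forces $(\mli \Gamma)_t$ to have nonempty interior. As $(\mli \Gamma)_t$ is also closed (by its very definition), this yields $(\mli \Gamma)_t = \cl \interior (\mli \Gamma)_t$; since $\Gamma_t$ is closed, it therefore suffices to prove $\interior (\mli \Gamma)_t \subseteq \Gamma_t$.

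Given $z \in \interior (\mli \Gamma)_t$, I would exhibit a ball around $z$ contained in $\Gamma_{t'}$ for a dense set of $t'$ near $t$ and then invoke full lower semicontinuity. First choose $\epsilon > 0$ with $\B(z, 2\epsilon) \subseteq (\mli \Gamma)_t$ and an orthonormal basis $(e^i)_{i=1}^d$ of $\R^d$, so that $z \pm 2\epsilon e^i \in (\mli \Gamma)_t$ for every $i$. Applying Lemma~\ref{lem:mlim1} to each of these $2d$ points with tolerance $\delta > 0$ and intersecting the resulting members of $\mH_t^\mu$ produces an open $O \in \mH_t$ and a Borel $\mu$-null set $N$ such that, for every $t' \in O \setminus N$, there are points $p^i_\pm(t') \in \Gamma_{t'} \cap \B(z \pm 2\epsilon e^i, \delta)$.

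The key geometric step is to show that the convex hull of the $p^i_\pm(t')$, which lies in $\Gamma_{t'}$ by convexity, contains a ball $\B(z, \rho)$ with $\rho > 0$ independent of $t'$. A support-function comparison handles this: the crosspolytope $\mathrm{conv}\{z \pm 2\epsilon e^i\}$ has support function $u \mapsto 2\epsilon \|u\|_\infty \geq (2\epsilon/\sqrt d)\|u\|_2$, so it contains $\B(z, 2\epsilon/\sqrt d)$, while perturbing each vertex by at most $\delta$ decreases the support function by at most $\delta \|u\|_2$. Choosing $\delta < 2\epsilon/\sqrt d$ and setting $\rho := 2\epsilon/\sqrt d - \delta$ then gives $\B(z, \rho) \subseteq \Gamma_{t'}$ for every $t' \in O \setminus N$.

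Finally, strict positivity of $\mu$ ensures $N$ contains no nonempty open subset of $O$, so $O \setminus N$ is dense in $O$. Full lower semicontinuity applied with the neighborhood $A = \B(z, \rho)$ of $z$ and the neighborhood $O$ of $t$ then yields $z \in \cl \Gamma_t = \Gamma_t$, completing the reduction. The main obstacle I expect is the geometric estimate in the third paragraph: one must take the crosspolytope radius $2\epsilon$ large relative to the approximation tolerance $\delta$ so that, after the inevitable perturbation, the convex hull of vertices drawn from $\Gamma_{t'}$ still contains a full neighborhood of $z$, which is what the support-function bound secures.
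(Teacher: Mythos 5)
Your proof is correct, and it follows the same overall skeleton as the paper's: reduce outer $\mu$-regularity to showing $\interior(\mli\Gamma)_t\subseteq\cl\Gamma_t$ (legitimate since a closed convex set with nonempty interior is the closure of its interior), produce a fixed ball around the given point that lies in $\Gamma_{t'}$ for a dense set of $t'$ near $t$, and finish with full lower semicontinuity. Where you differ is in how that middle step is obtained. The paper invokes Proposition~\ref{prop:mli}(B) wholesale: $\mli\Gamma$ is inner semicontinuous solid convex-valued and $\mli\Gamma_{t'}=\Gamma_{t'}$ $\mu$-a.e., and then a cited continuity estimate for inner semicontinuous mappings transfers the ball $\B(x,\epsilon/2)$ into $\mli\Gamma_{t'}$, hence into $\Gamma_{t'}$ off a null set. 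You instead work directly from Lemma~\ref{lem:mlim1} applied to the $2d$ crosspolytope vertices $z\pm 2\epsilon e^i$, and recover the uniform ball $\B(z,\rho)\subseteq\Gamma_{t'}$ by a quantitative support-function estimate on the perturbed crosspolytope, using only convexity of $\Gamma_{t'}$ and Proposition~\ref{prop:mli}(A). This is more self-contained: it bypasses part (B) of the proposition and the external reference, and your explicit choice $\delta<2\epsilon/\sqrt d$, $\rho=2\epsilon/\sqrt d-\delta$ makes the dimension dependence of the perturbation tolerance transparent (a point the analogous finite-point argument inside the proof of Proposition~\ref{prop:mli}(B) glosses over by using the fixed radius $\epsilon/2$). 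The cost is that you essentially re-derive inline the fragment of Proposition~\ref{prop:mli}(B) that the lemma actually needs, so the argument is longer; the paper's version is shorter but leans on more machinery.
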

\begin{proof}
By Proposition~\ref{prop:mli}, the mapping $\mli\Gamma$ is inner semicontinuous solid convex-valued. It suffices to show that, for any $t\in\T$ and $x\in\interior\mli \Gamma_t$, we have $x\in\cl\Gamma_t$. 

Let $\epsilon>0$ be such that $\cl \B(x,\epsilon)\subset \mli\Gamma_t$. By \cite[Proposition 1.6]{bw96}, there is $O_t\in\mH_t$ such that $\cl\B(x,\epsilon)\subset \mli\Gamma_t+\cl\B(0,\epsilon/2)$ for all $t\in O_t$. Thus $\B(x,\epsilon/2)\subset\mli\Gamma_t$ for all $t\in O_t$. By Proposition~\ref{prop:mli}, $\mli\Gamma_t=\Gamma_t$ $\mu$-a.e. so that there is a $\mu$-null set $N\in\mB(\T)$ such that $\B(x,\epsilon/2)\subset\Gamma_t$ for all $t\in O_t\cap N^C$. Since $\Gamma$ is fully lower semicontinuous and $\{t\in O_t\mid \B(x,\epsilon/2)\subset\Gamma_t\}$ is dense in $O_t$, we get $x\in\cl\Gamma_t$. 
\end{proof}
The following example demonstrates that there are inner semicontinuous solid convex-valued mappings which are outer $\mu$-regular but not fully lower semicontinuous.
\begin{example}
Let $\T\subseteq \R^n$ be open and $S\subset\T$ be a $(n-1)$-dimensional closed set such that $\T\backslash S$ is dense in $\T$. Let $H$ be the restriction of $(n-1)$-dimensional Hausdorff measure to $S$, and assume that $\supp(H)=S$ (see, e.g., \cite{afp00}). Let $\mu=\lambda+H$, where $\lambda$ is the Lebesque measure on $\T$. Define 
\[
	\Gamma_t= \begin{cases}
			\cl{\mathbb B}(0,r_2)\quad&\text{if } t\notin S,\\
			\cl{\mathbb B}(0,r_1)\quad&\text{if } t\in S,
			\end{cases}
\]
where $0\le r_1<r_2\le+\infty$. Since $S$ is a closed set, $\Gamma^{-1}(A)$ is open for every open $A$, and therefore $\Gamma$ is inner semicontinuous. To check outer $\mu$-regularity of $\Gamma$, it suffices to consider $t\in S$ and $x\notin \cl \mathbb B(0,r_1)$. Let $A\in\mathcal H_x^o$ be such that $\cl\mathbb B(0,r_1)\cap A=\emptyset$. Since $\supp (H)=S$, we have $\mu(\Gamma^{-1}(A)\cap O)\le\lambda(O)<\mu(O)$ for every $O\in\mathcal H_t$ so that, by Lemma~\ref{lem:mlim1}, $x\notin\mli\Gamma_t$. Thus $\mli\Gamma_t\subseteq\cl\Gamma_t$ for all $t$ and $\Gamma$ is outer $\mu$-regular.

Assume now that $0<r_1$. Let $x\in\B(0,r_2)$. There is an $A\subset\mH_x$ such that $A\subset\cl\B(0,r_2)$. Thus $\{t\in \T\mid A\subset \Gamma_t\}$ is dense in $\T$, because it contains $\T\backslash S$. However, $x\notin \cl\Gamma_t$ whenever $x\notin \cl\B(0,r_1)$ and $t\in S$. Therefore, $\Gamma$ is not fully lower semicontinuous though it is inner semicontinuous solid convex-valued.
\end{example}

\subsection{Necessity of outer regularity in measure}\label{sec:necessity}
In this section we show that, for inner semicontinuous solid convex $\R^d$-valued mappings, the condition $C(\T,\mu;\Gamma)=C(\T;\Gamma)$ in Theorem~\ref{thm:mlim0} is necessary for outer $\mu$-regularity. The proof is based on \cite[Lemma 5.2]{mic56}, which says that if $\T$ is a perfectly normal $T_1$-space, $X$ is a separable Banach space and $\Gamma$ is an inner semicontinuous closed convex nonempty-valued mapping, then there exists a sequence $(y^\nu)_{\nu=1}^\infty\subset C(\T;\Gamma)$ such that $(y_t^\nu)_\nu^\infty$ is dense in $\Gamma_t$ for every $t$. Such a sequence is usually referred to as a {\em Michael representation} of $\Gamma$.

Recall that a topological space is $T_1$ if for every distinct points $t$ and $t'$ there is an $O_{t}\in\mH_{t}$ with $t'\notin O_{t}$. The space is {\em normal} if for every disjoint closed sets $B$ and $B'$ there are disjoint open sets $O$ and $O'$ such that $B\subset O$ and $B'\subset O'$. The space $\T$ is {\em perfectly normal} if it is normal and every closed set is a countable intersection of open sets.

\begin{theorem}\label{thm:ces}
Assume that $\T$ is a Lindel\"of perfectly normal $T_1$-space. An inner semicontinuous solid convex $\R^d$-valued mapping $\Gamma$ is outer $\mu$-regular if and only if $C(\T,\mu;\Gamma)=C(\T;\Gamma)$.
\end{theorem}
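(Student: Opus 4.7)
The ``if'' direction is immediate from Theorem~\ref{thm:mlim0}, so the work is in the converse. Assuming $C(\T,\mu;\Gamma)=C(\T;\Gamma)$, I would aim to show $(\mli\Gamma)_t\subseteq\Gamma_t$ for every $t$ by producing enough continuous selections of $\mli\Gamma$ to fill out $\mli\Gamma_t$ pointwise, and then transferring them to selections of $\Gamma$ via the hypothesis.

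First, I would verify that a Lindel\"of perfectly normal space is strongly Lindel\"of. Since every open set is $F_\sigma$ in a perfectly normal space and since closed subspaces of a Lindel\"of space are Lindel\"of, every open subspace is a countable union of Lindel\"of subspaces and is thus Lindel\"of; equivalently, the space is hereditarily Lindel\"of. This lets me invoke Proposition~\ref{prop:mli}(B), from which $\mli\Gamma$ is inner semicontinuous solid convex-valued and $\mli\Gamma_t=\Gamma_t$ $\mu$-almost everywhere.

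Next I would apply Michael's representation theorem \cite[Lemma 5.2]{mic56} to the mapping $\mli\Gamma$: since $\T$ is perfectly normal $T_1$, $\R^d$ is a separable Banach space, and $\mli\Gamma$ is inner semicontinuous closed convex nonempty-valued, there exists $(y^\nu)_{\nu=1}^\infty\subseteq C(\T;\mli\Gamma)$ with $\{y_t^\nu\mid\nu\in\N\}$ dense in $\mli\Gamma_t$ for every $t$. Because $\mli\Gamma_t=\Gamma_t$ $\mu$-a.e., each $y^\nu$ is a continuous essential selection of $\Gamma$, so $y^\nu\in C(\T,\mu;\Gamma)$. The standing assumption then promotes each $y^\nu$ to an element of $C(\T;\Gamma)$, giving $y_t^\nu\in\Gamma_t$ for all $t$ and all $\nu$. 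Fixing any $t$ and using that $\Gamma_t$ is closed together with the density of $\{y_t^\nu\}$ in $\mli\Gamma_t$, I get $\mli\Gamma_t\subseteq\cl\Gamma_t=\Gamma_t$, which is outer $\mu$-regularity.

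The main obstacle is the first step: the theorem is stated under mere Lindel\"of-ness, whereas Proposition~\ref{prop:mli}(B) is proved under the strongly Lindel\"of assumption, so one has to check that perfect normality bridges this gap. Once this is in place, the role of Michael's theorem is essentially bookkeeping: it manufactures a countable dense family of continuous selections of $\mli\Gamma$, and the hypothesis $C(\T,\mu;\Gamma)=C(\T;\Gamma)$ does exactly the work of pushing that family inside $\Gamma$ pointwise.
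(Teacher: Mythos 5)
Your proposal is correct and follows essentially the same route as the paper: the forward direction via Theorem~\ref{thm:mlim0}, and the converse via Proposition~\ref{prop:mli} and a Michael representation of $\mli\Gamma$, using $\mli\Gamma_t=\Gamma_t$ $\mu$-a.e.\ to turn the dense family of continuous selections of $\mli\Gamma$ into essential selections of $\Gamma$, which the hypothesis promotes to genuine selections. The only difference is that you spell out why a Lindel\"of perfectly normal space is strongly Lindel\"of (correctly), where the paper simply cites Engelking.
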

\begin{proof}
Assuming that $\Gamma$ is outer $\mu$-regular, Theorem~\ref{thm:mlim0} implies that $C(\T,\mu;\Gamma)=C(\T;\Gamma)$. Assume that $C(\T,\mu;\Gamma)=C(\T;\Gamma)$. We use the fact that a Lindel\"of perfectly normal space is strongly Lindel\"of (see \cite[p. 194]{eng89}). By Proposition~\ref{prop:mli}, $t\mapsto\mli\Gamma_t$ is an inner semicontinuous closed convex nonempty-valued mapping. By \cite[Lemma 5.2]{mic56}, there is a $(y^\nu)_{\nu=1}^\infty\subseteq C(\T;\mli\Gamma)$ such that $(y_t^\nu)_{\nu= 1}^\infty$ is dense in $\mli\Gamma_t$ for every $t$. Thus 
\[
	\mli\Gamma_t=\cl\{y^\nu_t\mid \nu\ge 1\}\subseteq\Gamma_t\quad\forall t,
\]
where the inclusion follows from Proposition~\ref{prop:mli}, because now $\mli\Gamma_t=\Gamma_t$ $\mu$-a.e. implies $C(\T;\mli\Gamma)=C(\T,\mu;\Gamma)=C(\T;\Gamma)$. This shows that $\Gamma$ is outer $\mu$-regular. 
\end{proof}
\begin{remark}\label{rem:ces}
Let $\bar\Gamma$ be the image closure of $\Gamma$. The proof of Theorem~\ref{thm:ces} actually shows that if $t\mapsto\mli\Gamma$ is inner semicontinuous convex nonempty-valued, then $\Gamma$ is outer $\mu$-regular if and only if $C(\T,\mu;\bar\Gamma)=C(\T;\bar\Gamma)$. In this case the last inclusion in the proof follows from $\mli\Gamma_t\subseteq \cl\Gamma_t$ $\mu$-a.e. (see the proof of Proposition~\ref{prop:mli}).
\end{remark}

Without convexity or inner semicontinuity the necessity in Theorem~\ref{thm:ces} does not hold in general. Indeed, in either situation continuous selections need not exist at all.

\section{Applications to conjugates of integral functionals}\label{sec:5}
From now on we will assume that $\T$ is a Lindel\"of perfectly normal $T_1$-space and $X=\R^d$. Let $h$ be a {\em convex normal integrand} on $\T\times\R^d$, i.e., $h$ is an extended real-valued function and $t\mapsto \epi h_t=\{(x,\alpha)\mid h_t(x)\le \alpha\}$ is closed convex-valued and measurable from $\T$ to $\R^d\times\R$. By \cite[Proposition 14.28]{rw98}, $t\mapsto h_t(w_t)$ is measurable whenever $w:\T\rightarrow\R^d$ is measurable so that the {\em integral functional}
\[
	I_h(y)=\int_\T h_t(y_t)d\mu_t
\]
is well-defined on $C=C(\T;\R^d)$. Here and in what follows the integral of a measurable function is defined as $+\infty$ unless the positive part is integrable. 

Let $C_b=C_b(\T;\R^d)$ be the space of bounded continuous functions and $M_b$ be the space of $\reals^d$-valued finite Radon measures\footnote{A measure $\theta$ is {\em Radon} if it is countably additive and $|\theta|(B)=\sup\{|\theta|(K)\mid K\subseteq B,\ K\text{ is compact}\}$ for every $B\in\mB(\T)$.} on $\T$. The bilinear form 
\[
\langle y,\theta\rangle = \int_\T y_t d\theta_t
\]
 puts the spaces $C_b$ and $M_b$ in separating duality. Indeed, it follows from \cite[p.71]{bog7} that $C_b$ separates the points in $M_b$, whereas it is evident that $M_b$ separates the points of $C_b$.

Our aim is to study the {\em conjugate} 
\[
	I_h^*(\theta)=\sup\{\langle y,\theta\rangle -I_h(y)\}.
\]
In \cite[Theorem 5]{roc71} Rockafellar gave conditions under which the conjugate of $I_h$ can be expressed in terms of the conjugate of $h$ as
\begin{equation*}
	J_{h^*}(\theta)=\int_\T h_t^*\left(\left(\frac{d\theta}{d\mu}\right)_t\right)d\mu_t+\int_{\T} (h^*)^\infty_t\left(\left(\frac{d\theta^s}{d|\theta^s|}\right)_t\right)d|\theta^s|_t,
\end{equation*}
where $\theta^s$ is the singular part of $\theta\in M_b$ with respect to $\mu$. Here the {\em conjugate} of $h$ is defined by $h^*_t(v)=\sup_{x\in\R^d}\{x\cdot v-h_t(x)\}$ and $(h_t^*)^\infty$ denotes the {\em recession function} of $h^*_t$. That is, $(h_t^*)^\infty$ is defined by 
\[
(h^*_t)^\infty(v) = \sup_{\alpha>0}\frac{h^*_t(\alpha v+\bar v)-h^*_t(\bar v)}{\alpha},
\]
where $\bar v\in\dom h^*_t=\{v\in\R^d\mid h^*_t(v)<\infty\}$; see \cite[Chapter 8]{roc70a}. We will use the techniques from \cite{bv88} to generalize Rockafellar's result and to prove a result for  integral functionals on functions of bounded variation. For other related results, see, e.g., \cite{ab88,afp00,abm5,bv88,roc71,val86} and references therein. 

We denote the relative interior of a set $A\subset\R^d$ by $\rinterior A$. Recall that $\mu$ is {\em $\sigma$-finite} if $\T$ is a countable union of sets with finite $\mu$-measure. 
\begin{theorem}\label{thm:if1}
Assume that $\mu$ is a $\sigma$-finite Radon measure, $\dom h$ is inner semicontinuous, $\dom J_{h^*}\neq\emptyset$, $C_b(\T;\rinterior \dom h)\cap\dom I_h\neq\emptyset$ and that for every $y\in C(\T;\rinterior\mli\dom h)$ and for every $t$ there exists $O_t\in\mH_t$ such that 
\[
\int_{O_t}|h_t(y_t)|d\mu_t<\infty.
\]
If $\dom h$ is outer $\mu$-regular, then $I_h$ and $J_{h^*}$ are conjugates of each other. If $\interior\dom h_t\neq\emptyset$ for all $t$ and if $I_h$ and $J_{h^*}$ are conjugates of each other, then $\dom h$ is outer $\mu$-regular.
\end{theorem}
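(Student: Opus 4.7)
The plan is to adapt the convex-duality arguments of \cite{bv88} and \cite[Theorem 5]{roc71}, where the role of full lower semicontinuity is replaced by outer $\mu$-regularity. The bridge between the two frameworks is Theorem~\ref{thm:mlim0} together with Remark~\ref{rem:ces}: outer $\mu$-regularity of $\dom h$ is exactly what turns continuous essential selections of $\cl\dom h$ into genuine pointwise selections.

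For the sufficiency direction I would first establish $I_h^*\le J_{h^*}$ on $M_b$ by applying the pointwise Fenchel inequality and the standard recession-function estimate to the Lebesgue decomposition $\theta=(d\theta/d\mu)\mu+\theta^s$. For the reverse inequality, fix $\theta\in\dom J_{h^*}$ and use the hypothesis $C_b(\T;\rinterior\dom h)\cap\dom I_h\neq\emptyset$ to select a reference $y^0$. I would then build a maximizing sequence for $\langle y,\theta\rangle-J_{h^*}(\theta)$ using (i) a Michael-type representation of $\mli\dom h$ from Proposition~\ref{prop:mli} to supply countably many continuous selections, (ii) convex combinations with $y^0$ to move the approximants into $\rinterior\dom h$, and (iii) the local-integrability hypothesis on $C(\T;\rinterior\mli\dom h)$ to control $I_h$ on small neighborhoods. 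Outer $\mu$-regularity is what guarantees these continuous essential selections are genuine selections of $\cl\dom h$, which is the step where \cite[Theorem 5]{roc71} appeals to full lower semicontinuity.

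For the necessity direction I assume $\interior\dom h_t\neq\emptyset$ for all $t$ and that $I_h$ and $J_{h^*}$ are conjugate. The plan is to verify $C(\T,\mu;\cl\dom h)=C(\T;\cl\dom h)$ and then invoke Remark~\ref{rem:ces}, noting that by Proposition~\ref{prop:mli} the mapping $t\mapsto\mli\dom h_t$ is inner semicontinuous solid convex-valued. Given $y\in C(\T,\mu;\cl\dom h)$, I convex-combine with the interior selection $y^0$ to form $y^\lambda=(1-\lambda)y+\lambda y^0\in C(\T;\rinterior\mli\dom h)$ for $\lambda\in(0,1]$. The local-integrability hypothesis, together with $\sigma$-finiteness of $\mu$ and a compact exhaustion of $\T$, gives $I_h(y^\lambda)<\infty$; the conjugacy provides lower semicontinuity of $I_h$ on bounded continuous pieces, and passing to the limit $\lambda\downarrow 0$ on each compact set forces $y_t\in\cl\dom h_t$ for every $t$.

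The main obstacle I foresee is the coordination, in the sufficiency direction, of the absolutely continuous and singular components of the candidate measures $\theta$ with the continuous approximations of $y$. The singular part carries no $\mu$-mass, so its contribution to $J_{h^*}$ is captured only through the recession function, and the approximation must match both components simultaneously. Outer $\mu$-regularity is the precise ingredient that promotes the almost-everywhere selections arising from pointwise duality to everywhere selections compatible with continuity, and without it the construction in \cite{bv88} breaks down at exactly this juncture.
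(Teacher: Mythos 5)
Your overall architecture matches the paper's --- the easy inequality via Fenchel/recession estimates, the hard inequality via continuous approximation, and necessity via Theorem~\ref{thm:ces} --- but both crucial steps have genuine gaps as sketched. In the sufficiency direction the heart of the matter is showing $\inf_{y\in C_b}I^\theta_{\bar h}(y)\le\int_\T\inf_{x}\bar h_t(x)\,d\mu^\theta_t$ for the shifted integrand $\bar h$ built from the Lebesgue decomposition of $\theta$. A Michael representation $(y^\nu)$ of $\mli\dom h$ cannot deliver this: it only bounds the left side by $\inf_\nu\int_\T\bar h_t(y^\nu_t)\,d\mu^\theta_t$, and $\inf_\nu\int\ge\int\inf_\nu$ --- the inequality points the wrong way. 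The pointwise near-minimizer changes from one $t$ to another, so one needs a single continuous function tracking a measurable near-optimal selection. The paper gets this by the interchange rule \cite[Theorem 14.60]{rw98} on the decomposable space $L^\infty(\T,\mu^\theta;\R^d)$, pushes the resulting $w$ into $\rinterior\dom h$ by convex combination with $\bar y$, applies Lusin's theorem to make $w$ continuous off a set of small $\mu^\theta$-measure with compact complement, extends by Michael's selection theorem, and patches with $\bar y$ via a partition of unity, using the local-integrability hypothesis and inner regularity to control the error. Invoking the PCU-stability interchange theorem of \cite{bv88} would be an acceptable alternative, but ``a maximizing sequence from a Michael representation'' is not. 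Note also that even your ``easy'' inequality $I_h^*\le J_{h^*}$ already needs outer $\mu$-regularity: the recession estimate on the singular term requires $y_t\in\cl\dom h_t$ $|\theta^s|$-a.e., and $y\in\dom I_h$ only gives this $\mu$-a.e.; since $\theta^s$ and $\mu$ are mutually singular this is vacuous without Theorem~\ref{thm:mlim0}.

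In the necessity direction there are two problems. First, local integrability of $h$ along $y^\lambda$ near each point does not give $I_h(y^\lambda)<\infty$: the space $\T$ is only Lindel\"of, so no compact exhaustion is available, and even a countable cover by neighborhoods with finite local integrals need not sum to something finite. The paper instead localizes at a single $t$: it patches the perturbed function with the reference $\bar y\in\dom I_h$ by a continuous partition of unity subordinate to $(O_t,\T\setminus\{t\})$, producing an element of $\dom I_h$ that agrees with the perturbed function at $t$. Second, ``lower semicontinuity of $I_h$'' is not the mechanism that forces $y_t\in\cl\dom h_t$ everywhere; what does the work is the explicit computation that $J_{h^*}^*(y)=+\infty$ unless $y_t\in\cl\dom h_t$ for \emph{all} $t$ (obtained by testing $J_{h^*}$ against purely atomic singular measures), so that the assumed conjugacy $I_h=J_{h^*}^*$ converts membership in $\dom I_h$ into the everywhere pointwise constraint. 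Evaluating the patched functions at the fixed $t$ and letting the convex-combination parameter tend to zero then yields $y_t\in\cl\dom h_t$, after which Theorem~\ref{thm:ces} finishes. Without these two devices your argument does not close.
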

\begin{proof}
We have that
\begin{align}
	J_{h^*}^*(y)&=\sup_{\theta\in M_b}\left\{\int_\T y_td\theta_t-J_{h^*}(\theta)\right\}\nonumber \\
	&=\sup_{\theta'\in L^1(\T,\mu;\R^d)}\left\{\int_\T  y_t\cdot \theta'_td\mu_t-I_{h^*}(\theta')\right\}\nonumber \\
	&\quad+\sup_{\theta\in M_b,\theta^{s'}\in L^1(\T,|\theta^s|;\R^d)}\left\{\int_\T y_t\cdot\theta^{s'}_td|\theta^s|_t-\int_\T (h^*_t)^\infty(\theta^{s'}_t)d|\theta^s|_t\right\}\nonumber \\
	&=\begin{cases}
 	I_h(y)\quad&\text{if } y_t\in\cl\dom h_t\ \forall t,\\
	+\infty\quad&\text{otherwise}.
	\end{cases}\label{eq:if2}
\end{align}
Above the second equality follows from the positive homogeneity of  $(h^*_t)^\infty$ \cite[Theorem 8.5]{roc70a}, and the third equality follows by first applying \cite[Theorem 14.60]{rw98} on the second and the third line, where one uses the fact that the indicator function $\delta_{\cl\dom h_t}$ is the conjugate of $(h^*_t)^\infty$ \cite[Theorem 13.3]{roc70a}; and then taking supremum over all purely atomic finite measures which are singular with respect to $\mu$. On the other hand, since $\dom J_{h^*}\neq\emptyset$, there is a $\bar \theta\in \dom J_{h^*}$ which is absolutely continuous with respect to $\mu$. Therefore, for all $w\in L^\infty(\T,\mu;\R^d)$,
\begin{equation}\label{eq:if3}
	\int_\T h_t(w_t)d\mu_t\ge \int_\T \left[w_t\cdot\left(\frac{d\bar\theta}{d\mu}\right)_t-h_t^*\left(\left(\frac{d\bar\theta}{d\mu}\right)_t\right)\right]d\mu_t >-\infty.
\end{equation}

Assume first that $\dom h$ is outer $\mu$-regular. By Theorem~\ref{thm:mlim0}, $y_t\in\cl\dom h_t$ for every $t$ whenever $y\in\dom I_h$ so that \eqref{eq:if2} implies that $J_{h^*}^*=I_h$. Therefore, $I_h^*\le J_{h^*}$ and consequently it suffices to show that $I_h^*\ge J_{h^*}$.

Since $|\theta^s|$ and $\mu$ are singular, there is a $\bar B\in\mB(T)$ such that $\mu(\bar B^C)=|\theta^s|(\bar B)=0$. We define 
\begin{align*}
	\bar h_t(x)=
	\begin{cases} 
		-x\cdot\left(\frac{d\theta}{d\mu}\right)_t+h_t(x)\quad&\text{if } t\in \bar B\\
		-x\cdot\left(\frac{d\theta^s}{d|\theta^s|}\right)_t+\delta_{\cl\dom h_t}(x)\quad&\text{if } t\in \bar B^C,
	\end{cases}
\end{align*}
which is a normal integrand (see \cite[Chapter 14]{rw98}) with $\cl\dom \bar h_t=\cl\dom h_t$ for all $t$. Since $y_t\in \cl\dom h_t$ for all $t$ whenever $I_h(y)<\infty$ and since, by \eqref{eq:if3}, $I_h(y)>-\infty$ for all $y$, we have
\begin{align*}
I_h^*(\theta) &= -\inf_{y\in C_b}\Big\{\int_{\bar B} \left[-y_t\cdot\left(\frac{d\theta}{d\mu}\right)_t+h_t(y_t)\right]d\mu_t\\
&\qquad\qquad+\int_{\bar B^C}\left[-y_t\cdot \left(\frac{d\theta^s}{d|\theta^s|}\right)_t+\delta_{\cl\dom h_t}(y_t)\right]d|\theta^s|_t\Big\}\\
&= - \inf_{y\in C_b}I_{\bar h}^\theta(y),
\end{align*}
where we defined $\mu^\theta=\mu+|\theta^s|$ and $I_{\bar h}^\theta(w)=\int_\T \bar h_t(w_t)d\mu^\theta_t$ for any measurable $w:\T\rightarrow\R^d$. By \cite[Theorem 14.60]{rw98},
\begin{align}
	\inf_{w\in L^\infty(\T,\mu^\theta;\R^d)} I_{\bar h}^\theta(w)&= \int_\T \inf_{x\in\R^d}\bar h_t(x)d\mu^\theta_t\nonumber \\
 &=-\int_\T h_t^*\left(\left(\frac{d\theta}{d\mu}\right)_t\right)d\mu_t - \int_\T (h_t^*)^\infty\left(\left(\frac{d\theta^s}{d|\theta^s|}\right)_t\right)d|\theta^s|_t \nonumber\\
 &=-J_{h^*}(\theta).\label{eq:barh}
\end{align}

Let us show that we can restrict the infimum in the above expression to $w\in L^\infty(\T,\mu^\theta;\R^d)$ with $w_t\in\rinterior\dom h_t$ for all $t$. Firstly, if $I_{\bar h}^\theta (w)<\infty$, then there is $\bar w$ such that $\bar w_t=w_t$ outside a Borel $\mu^\theta$-null set and $\bar w_t\in\cl\dom h_t$ for all $t$. Secondly, let $\bar y\in C_b(\T;\rinterior\dom h)\cap\dom I_h$ so that $w^\nu=\frac{1}{\nu}\bar y+(1-\frac{1}{\nu})\bar w$ satisfies $w^\nu_t\in\rinterior\dom h_t$ for all $t$. It follows from convexity that $I_{\bar h}^\theta(w^\nu)\le \frac{1}{\nu}I^\theta_{\bar h}(\bar y)+(1-\frac{1}{\nu})I^\theta_{\bar h}(w)$, so $\lim_{\nu\rightarrow\infty} I^\theta_{\bar h}(w^\nu)\le I^\theta_{\bar h}(w)$. 

On the other hand, it follows from \eqref{eq:if3} that $I_{\bar h}^\theta(w)>-\infty$ for every $w\in L^\infty(\T,\mu^\theta;\R^d)$. Therefore it suffices to show that, for every $w\in L^\infty(\T,\mu^\theta;\R^d)$ with $I_{\bar h}^\theta(w)<\infty$ and $w_t\in\rinterior\dom h_t$ for all $t$ and for every $\epsilon>0$, there is $y\in C_b$ for which $I^\theta_{\bar h}(y)\le I^\theta_{\bar h}(w)+\epsilon$.

Since $\mu^\theta$ is Radon and since $I^\theta_{\bar h}(\bar y)$ and $I_{\bar h}^\theta(w)$ are finite, there is, by Lusin's theorem (see, e.g., \cite[Theorem 7.1.13]{bog7}), an open $\bar O\subset\T$ such that $\int_{\bar O} (|\bar h_t(\bar y_t)|+ |\bar h_t(w_t)|)d\mu^\theta<\epsilon/3$, $\bar O^C$ is compact and $w$ is continuous relative to $\bar O^C$. The mapping
\[
	\Gamma_t= \begin{cases}
		w_t\quad&\text{if } t\in \bar O^C\\
		\rinterior\dom h_t\quad&\text{if }t\in O
	\end{cases}
\]
is inner semicontinuous convex nonempty-valued so that, by \cite[Theorem 3.1''']{mic56}, there is a $y^w\in C$ with $y^w_t=w_t$ for all $t\in \bar O^C$ and $y^w_t\in\rinterior\dom h_t$ for all $t$. Since $\bar O^C$ is compact, there is an open $O^b\supset \bar O^C$ such that $y^w$ is bounded on $O^b$. 

Since, for all $t\in O^b$, there is an $O_t\in\mH_t$ with $\int_{O_t} |\bar h_t(y^w_t)|d\mu^\theta_t<\infty$, and since $\bar O^C$ is compact, there is an open set $O'\supset \bar O^C$ such that $B\mapsto \int_{O'\cap B} |\bar h_t(y^w_t)|d\mu^\theta_t$ is a finite measure, which is also Radon (see \cite[Lemma 7.1.11]{bog7}). Therefore \cite[Theorem 3.4]{gp80} implies the existence of an open $O\supset \bar O^C$ with $\int_{O\backslash \bar O^C} |\bar h_t(y^w_t)|d\mu_t^\theta<\epsilon/3$.

Define $\hat O=O\cap O^b$. Since $\hat O$ and $\bar O$ form an open cover of $\T$ and since $\T$ is normal, there is, by \cite[Theorem 36.1]{mun00}, a continuous partition of unity $(\hat \alpha,\bar\alpha)$ subordinate to $(\hat O,\bar O)$. We define $y=\hat \alpha y^w+\bar \alpha \bar y$ so that $y\in C_b$ and
\begin{align*}
	\int_\T \bar h_t(y_t)d\mu^\theta_t &\le \int_{\bar O^C} \bar h_t(w_t)d\mu^\theta_t+\int_{\hat O\backslash \bar O^C} \hat\alpha_t |\bar h_t(y^w_t)|d\mu^\theta_t+\int_{\bar O}\bar\alpha |\bar h_t(\bar y_t)|d\mu^\theta_t\\
	&\le\int \bar h_t(w_t)d\mu^\theta_t+\epsilon,
\end{align*}
which finishes the proof of the sufficiency.

Assume now that $\interior\dom h_t\neq\emptyset$ for all $t$, $I_h^*=J_{h^*}$ and that $J_{h^*}^*=I_h$. We prove outer $\mu$-regularity of $\dom h$ using Theorem~\ref{thm:ces}. Let $\tilde y\in C$ be such that $\tilde y_t\in\cl\dom h_t$ $\mu$-a.e. By Proposition~\ref{prop:mli} and Theorem~\ref{thm:mlim0}, $\tilde y\in C(\T;\mli\dom h)$ so that, for every $\nu\ge 1$, by \eqref{eq:inc}, the function $\tilde y^\nu=\frac{1}{\nu}\bar y+(1-\frac{1}{\nu})\tilde y$ satisfies $\tilde y^\nu\in C(\T,\interior\mli\dom h)$.

Fix $\nu$ and $t$. There is an $O_t\in\mH_t$ such that $\int_{O_t}|h_t(\tilde y_t^\nu)|d\mu_t<\infty$ and $\tilde y^\nu$ is bounded on $O_t$. Let $(\alpha^1,\alpha^2)$ be a continuous partition of unity subordinate to $(O_t,\T\backslash\{t\})$. We define $y^\nu=\alpha^1\tilde y^\nu+\alpha^2 \bar y$, which satisfies $y^\nu\in \dom I_h$. Therefore \eqref{eq:if2} implies that $y^\nu_{t'}\in\cl\dom h_{t'}$ for all ${t'}$. Since $y^\nu_t\rightarrow \tilde y_t$, we have that $\tilde y_t\in\cl\dom h_t$. Since $t$ was arbitrary, $\tilde y_t\in\cl\dom h_t$ for all $t$. By Theorem~\ref{thm:ces}, $\dom h$ is outer $\mu$-regular.
\end{proof}

When $\T\subset\R^n$, the following corollary generalizes \cite[Theorem 5]{roc71} by relaxing full lower semicontinuity of $\dom h$; see Lemma~\ref{lem:fisc}.
 \begin{corollary}\label{cor:ihcc}
 Assume that $\T\subset\R^n$ is compact, $\mu$ is a $\sigma$-finite Radon measure, $\dom h$ is inner semicontinuous with $\interior\dom h_t\neq\emptyset$ for all $t$ and that $I_h$ is finite on
\[
	\left\{y\in C\mid \exists r>0:\ \B(y_t,r)\subset \dom h_t\ \mu\text{-a.e.}\right\}.
\]
Then $\dom h$ is outer $\mu$-regular if and only if $I_h$ and $J_h^*$ are conjugates of each other. In this case $\interior \dom I_h=C(\T;\interior\dom h)$ (where the interior is with respect to the supremum norm).
 \end{corollary}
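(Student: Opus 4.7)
The plan is to derive Corollary~\ref{cor:ihcc} from Theorem~\ref{thm:if1} by verifying all hypotheses of the latter. Compactness of $\T$, the solid interior assumption on $\dom h_t$, and the finiteness hypothesis on $I_h$ together supply a ``uniformly thick'' continuous test function that controls the remaining integrability and nonemptiness conditions.

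First I would apply Michael's selection theorem to $t\mapsto\interior\dom h_t$, which is inner semicontinuous and convex-valued, to produce a continuous $y^*\in C(\T;\interior\dom h)$. A standard openness argument for convex inner semicontinuous mappings (approximate $\B(y^*_{t_0},\rho)\subset\interior\dom h_{t_0}$ by the convex hull of finitely many vertices, apply inner semicontinuity at each vertex, and use convexity of $\dom h_t$) gives, for each $t_0$, a local uniform radius $r_{t_0}>0$ with $\B(y^*_t,r_{t_0})\subset\dom h_t$ on a neighborhood of $t_0$. Compactness of $\T$ upgrades this to a global $r^*>0$ with $\B(y^*_t,r^*)\subset\dom h_t$ for all $t$, so $y^*$ lies in the set on which $I_h$ is assumed finite, giving $C_b(\T;\rinterior\dom h)\cap\dom I_h\neq\emptyset$.

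For the local integrability condition, given $y\in C(\T;\rinterior\mli\dom h)$ and $t_0\in\T$, Proposition~\ref{prop:mli}(B) ensures that $\mli\dom h$ is inner semicontinuous solid convex-valued and agrees with $\dom h$ $\mu$-a.e.; the same openness argument applied to $\mli\dom h$ produces an open $O_{t_0}'\ni t_0$ and $r_{t_0}>0$ with $\B(y_t,r_{t_0})\subset\dom h_t$ for $\mu$-a.e.\ $t\in O_{t_0}'$. Picking $O_{t_0}$ with $\cl O_{t_0}\subset O_{t_0}'$ and a continuous partition of unity $(\alpha_1,\alpha_2)$ subordinate to $(O_{t_0}',\T\setminus\cl O_{t_0})$, the patched function $\tilde y=\alpha_1 y+\alpha_2 y^*$ agrees with $y$ on $O_{t_0}$, and the convex identity $\alpha\B(a,s)+(1-\alpha)\B(b,s)=\B(\alpha a+(1-\alpha)b,s)$ together with convexity of $\dom h_t$ yields $\B(\tilde y_t,\min(r_{t_0},r^*))\subset\dom h_t$ for $\mu$-a.e.\ $t$. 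The corollary's hypothesis then gives $I_h(\tilde y)\in\R$, whence $\int_{O_{t_0}}|h_t(y_t)|\,d\mu_t<\infty$. For $\dom J_{h^*}\neq\emptyset$, I would take a measurable subgradient selection $v_t\in\partial h_t(y^*_t)$; the subgradient inequality applied to the increments $y^*\pm(r^*/2)e_i$ (also in the corollary's set, by the same ball argument) bounds $|v_t|$ by an $L^1(\mu)$ function, and Fenchel equality gives $J_{h^*}(v\mu)=\int v_t\cdot y^*_t\,d\mu_t-I_h(y^*)<\infty$.

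With the hypotheses of Theorem~\ref{thm:if1} verified, the equivalence is immediate. For $\interior\dom I_h=C(\T;\interior\dom h)$, the inclusion ``$\supseteq$'' follows from the uniform-thickness argument once more, while ``$\subseteq$'' uses outer $\mu$-regularity and Theorem~\ref{thm:mlim0} to show $y+z\in C(\T;\cl\dom h)$ for $z\in C$ of sufficiently small sup norm; restricting to constant $z\equiv z_0\in\R^d$ then forces $\B(y_t,r)\subset\cl\dom h_t$ for all $t$, and $\interior\cl\dom h_t=\interior\dom h_t$ (valid since $\dom h_t$ is solid convex) concludes. The main obstacle I anticipate is the patching step, where the Minkowski ball identity is essential to propagate a uniform thickness radius from $(y,y^*)$ to $\tilde y$ independently of the weights $\alpha_i(t)$.
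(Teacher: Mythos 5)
Your proof is correct and follows essentially the same route as the paper: verify the hypotheses of Theorem~\ref{thm:if1} via Michael's selection theorem, a compactness-based uniform-radius argument, and an integrable subgradient selection (the paper simply imports the latter two steps from \cite{roc71} as Lemma~2 and Theorem~2 rather than reproving them), and then deduce $\interior\dom I_h=C(\T;\interior\dom h)$ from Theorem~\ref{thm:mlim0}. The partition-of-unity patching in your local integrability step is harmless but redundant here, since compactness of $\T$ already gives a global $r>0$ with $\B(y_t,r)\subset\dom h_t$ $\mu$-a.e., so $I_h(y)$ is finite directly.
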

\begin{proof}
Let us verify the assumptions of Theorem~\ref{thm:if1}. By Proposition~\ref{prop:mli}, the mapping $t\mapsto \mli\dom h_t$ is inner semicontinuous solid convex-valued. Let $\hat y\in C(\T;\interior\mli\dom h)$; such $\hat y$ exists by \cite[Theorem 3.1''']{mic56}. By \cite[Lemma 2]{roc71}, there is an $r>0$ with $\hat y_t+x\in\interior\mli\dom h_t$ for all $t$ whenever $|x|<r$; see \cite[p.~460]{roc71}. By Proposition~\ref{prop:mli}, $\B(\hat y_t,r)\subset\dom h_t$ $\mu$-a.e. so that $\hat y\in\dom I_h$. Moreover, by \cite[Theorem 2]{roc71}, there is a $w\in L^1(\T,\mu;\R^d)$ with $I_{h^*}(w)<\infty$. By defining $\bar\theta\in M^b$ as $d\bar\theta/d\mu=w$ and $\bar\theta^s=0$, we have that $J_{h^*}(\bar\theta)<\infty$. Therefore, Theorem~\ref{thm:if1} is applicable, and consequently $\dom h$ is outer $\mu$-regular if and only if $I_h$ and $J_h^*$ are conjugates of each other.

Assume that $\dom h$ is outer $\mu$-regular. As above, we get that $\interior\dom I_h\supset C(\T;\interior\dom h)$. Assume that $\bar y\in\interior\dom I_h$. There is an $r>0$ such that $\bar y_t+x\in\dom h_t$ $\mu$-a.e. whenever $|x|<r$. By Theorem~\ref{thm:mlim0}, we have $\bar y_t+x\in\dom h_t$ for all $t$ whenever $|x|<r$. Thus $\bar y_t\in\interior\dom h_t$ for all $t$ and $\interior\dom I_h\subseteq C(\T;\interior\dom h)$.
\end{proof}

\subsection{Integral functionals on left continuous functions of bounded variation}\label{sec:left}
Let $\tau$ and $\tau_l$ be, respectively, the standard topology and the {\em left half-open topology} on $\R$. Here $\tau_l$ is generated by the basis $\{(s,t]\mid s<t\}$. The space $(\R,\tau_l)$ is a perfectly normal Lindel\"of $T_1$-space; in particular it is strongly Lindel\"of  and $C((\R,\tau_l);\R^d)$ is the space of left continuous functions (see \cite[p. 75]{ss95} \cite[p. 194]{eng89}).  We say that a set-valued mapping $\Gamma:[0,T]\rightrightarrows\R^d$ is {\em left-inner semicontinuous}, if it is inner semicontinuous with respect to $\tau_l$. Similarly we say that $\Gamma$ is {\em left-outer $\mu$-regular} if $\Gamma$ is outer $\mu$-regular with respect to $\tau_l$.

Let $(\Gamma^\alpha)_{\alpha\in\mathcal J}$ be a family of $\R^d$-valued set-valued mappings. A closed-valued mapping $\Gamma$ is called {\em $\mu\text{-}$essential supremum} of $(\Gamma^\alpha)_{\alpha\in\mathcal J}$ if $\Gamma^\alpha_t\subseteq\Gamma_t$ $\mu$-a.e. for all $\alpha\in\mathcal J$, and if $\tilde \Gamma$ is another closed-valued mapping satisfying $\Gamma^\alpha_t\subseteq\tilde\Gamma_t$ $\mu$-a.e. for all $\alpha\in\mathcal J$, then $\Gamma_t\subseteq \tilde\Gamma_t$ $\mu$-a.e. When $\mu$ is $\sigma$-finite, $\mu\text{-}$essential supremum exists (see, e.g., \cite[Theorem 1.3]{wan01}), it is defined almost everywhere and it is denoted by $\mu\text{-}\esssup_{\alpha\in\mathcal J}\Gamma^\alpha$.

We denote the space of left continuous $\R^d$-valued functions of bounded variation on $\R$ by $BV$.
\begin{theorem}\label{thm:BV}
Assume that $\mu$ is a $\sigma$-finite Radon measure on $([0,T],\tau)$, $h$ is a convex normal integrand on $[0,T]\times\R^d$ with $\dom h$ left-inner semicontinuous, $\interior\dom h_t\neq\emptyset$ for all $t$, and that $I_h$ is finite on the nonempty set
\[
	\left\{x\in BV\mid \exists r>0:\, \mathbb B(x_t,r)\subset\dom h_t\ \mu\text{-a.e. } t\in[0,T]\right\}.
\]
Then $\dom h$ is left-outer $\mu$-regular if and only if, for every $\theta\in M_b([0,T],\tau;\R^d)$,
\begin{align*}
	\sup_{x\in BV}\left\{\int_{[0,T]} x_td\theta_t-\int_{[0,T]} h_t(x_t)d\mu_t\right\} &=J_{h^*}(\theta).
\end{align*}
\end{theorem}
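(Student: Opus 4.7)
The plan is to apply Theorem~\ref{thm:if1} with $\T=([0,T],\tau_l)$, which is a strongly Lindel\"of perfectly normal $T_1$-space \cite[p.~194]{eng89} whose continuous $\R^d$-valued functions are exactly the left-continuous ones \cite[p.~75]{ss95}. Under this choice, left-inner semicontinuity and left-outer $\mu$-regularity of $\dom h$ coincide with inner semicontinuity and outer $\mu$-regularity as in Section~\ref{sec:outerregularity}, and $BV\subset C_b(\T;\R^d)$. A standard Radon measure $\theta\in M_b([0,T],\tau;\R^d)$ pairs with any $y\in C_b(\T;\R^d)$ via $\int y\,d\theta$ since left-continuous bounded functions are Borel measurable.

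First I would verify the hypotheses of Theorem~\ref{thm:if1} for this $\T$, paralleling the proof of Corollary~\ref{cor:ihcc}: by Proposition~\ref{prop:mli}, $\mli\dom h$ is inner semicontinuous solid convex-valued in $\tau_l$; Michael's theorem yields $\hat y\in C(\T;\interior\mli\dom h)$, which is bounded on the compact $[0,T]$; \cite[Lemma 2]{roc71} supplies $r>0$ with $\B(\hat y_t,r)\subset\interior\mli\dom h_t$ for all $t$, so by Proposition~\ref{prop:mli} $\B(\hat y_t,r)\subset\dom h_t$ $\mu$-a.e.; the explicit assumption that $I_h$ is finite on $\{x\in BV\mid \exists r>0:\B(x_t,r)\subset\dom h_t\ \mu\text{-a.e.}\}$ then gives $\hat y\in\dom I_h$ and, via convexity, the local $\mu$-integrability of $|h_t(y_t)|$ required for $y\in C(\T;\rinterior\mli\dom h)$; and \cite[Theorem 2]{roc71} yields $\bar\theta\in\dom J_{h^*}$. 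Theorem~\ref{thm:if1} then asserts that $\dom h$ is left-outer $\mu$-regular if and only if the conjugate of $I_h$ taken over $C_b(\T;\R^d)$ equals $J_{h^*}$.

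Second, the task reduces to identifying the supremum over $C_b(\T;\R^d)$ with the supremum over $BV$ for every $\theta\in M_b([0,T],\tau;\R^d)$. For this, the test-function construction in the proof of Theorem~\ref{thm:if1} is to be carried out entirely inside $BV$: in $([0,T],\tau_l)$ the open sets $\bar O,O,O^b,\hat O$ are countable unions of half-open intervals, the Munkres partition of unity $(\hat\alpha,\bar\alpha)$ can be chosen as indicators of such unions (hence as left-continuous step functions in $BV$), the Lusin approximation $w$ can be replaced by a left-continuous step function, and the Michael-type selection $y^w$ on $\bar O$ can be taken constant on each piece $(t_i,t_{i+1}]$ with value $x_{i+1}\in\rinterior\dom h_{t_{i+1}}$; on sufficiently fine partitions, left-inner semicontinuity combined with solidness (as in the argument of Proposition~\ref{prop:mli}) guarantees that $x_{i+1}\in\dom h_{t'}$ for every $t'\in(t_i,t_{i+1}]$. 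The resulting $y=\hat\alpha y^w+\bar\alpha\bar y$ lies in $BV\cap\dom I_h$ and preserves the estimate $\int\bar h_t(y)\,d\mu^\theta\leq\int\bar h_t(w)\,d\mu^\theta+\epsilon$.

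The main obstacle is verifying that this piecewise-constant refinement actually preserves that bound and keeps $h_t(y^w_t)$ $\mu^\theta$-integrable on $\bar O$; this requires combining left-inner semicontinuity of $\dom h$, convexity of $h_t$, and the stated finiteness of $I_h$ on the $BV$-set in order to control $h_t(x_{i+1})$ uniformly on each $(t_i,t_{i+1}]$ as the mesh refines. For the converse implication, the necessity argument in the second half of the proof of Theorem~\ref{thm:if1} transfers to $\tau_l$ with partition-of-unity elements chosen as left-continuous step functions, so that $y^\nu\in BV\cap\dom I_h$; Theorem~\ref{thm:ces} applied in $\tau_l$ then delivers left-outer $\mu$-regularity of $\dom h$.
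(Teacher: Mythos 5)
There is a genuine gap at the very first step: Theorem~\ref{thm:if1} cannot be applied with $\T=([0,T],\tau_l)$. That theorem assumes $\mu$ (and, in its proof, $\mu^\theta$) is a Radon measure \emph{on $\T$}, and its proof leans on Lusin's theorem producing a $\tau_l$-compact set $\bar O^C$ of nearly full measure together with \cite[Theorem 3.4]{gp80} for Radon measures. But $([0,T],\tau_l)$ is homeomorphic to a piece of the Sorgenfrey line, whose compact subsets are countable; hence the only finite Borel measures that are Radon with respect to $\tau_l$ are purely atomic, and for any nonatomic $\mu$ (e.g.\ Lebesgue measure) the hypotheses and the key steps of Theorem~\ref{thm:if1} fail outright. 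This is exactly why Theorem~\ref{thm:BV} assumes $\mu$ Radon only with respect to the standard topology $\tau$, and why the paper does \emph{not} route the proof through Theorem~\ref{thm:if1}. Instead it exploits that $BV$ is PCU-stable and invokes \cite[Theorem 1]{bv88} to commute the infimum over $BV$ with the integral, reducing everything to the identity $\inf_{x\in BV}\int\bar h_t(x_t)\,d\mu^\theta_t=\int\inf_{x\in\Gamma_t}\bar h_t(x)\,d\mu^\theta_t$ with $\Gamma$ the $\mu^\theta$-essential supremum of $BV\cap\dom I_h$; the crux is then to identify $\Gamma_t$ with $\mli\dom h_t$ $\mu^\theta$-a.e., which is entirely absent from your proposal.

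The second half of your plan has the same character: the reduction from $C_b(([0,T],\tau_l);\R^d)$ to $BV$ via left-continuous step functions is precisely where you yourself flag ``the main obstacle'' (uniform control of $h_t(x_{i+1})$ on each $(t_i,t_{i+1}]$ and $\mu^\theta$-integrability of $h_t(y^w_t)$ as the mesh refines), and no argument is offered to close it; this is the work that \cite[Theorem 1]{bv88} performs in the paper. Your converse likewise presupposes the analogue of \eqref{eq:if2} over $BV$, which is not established; the paper instead argues the necessity directly by picking $\bar t$ and $\bar v$ with $\sup\{\bar v\cdot x\mid x\in\mli\dom h_{\bar t}\}>(h^*_{\bar t})^\infty(\bar v)$, noting $\mu$ has no atom at $\bar t$, and testing against $\theta=\bar\theta+\bar v\delta_{\bar t}$ to produce a strict inequality. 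To repair your proof you would either have to re-prove a $\tau_l$-version of Theorem~\ref{thm:if1} without Radon-ness in $\tau_l$, or follow the essential-supremum route.
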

\begin{proof}
By \cite[Theorem 2]{roc71}, there is a $w\in L^1(\T,\mu;\R^d)$ with $I_{h^*}(w)<\infty$. Thus, as in \eqref{eq:if3}, $I_h(x)>-\infty$ for all $x\in BV$. Let $\bar B\in\mB(T)$ be such that $\mu(\bar B^C)=|\theta^s|(\bar B)=0$. We define
\begin{align*}
	\bar h_t(x)=
	\begin{cases} 
		-x\cdot\left(\frac{d\theta}{d\mu}\right)_t+h_t(x)\quad&\text{if } t\in \bar B\\
		-x\cdot \left(\frac{d\theta^s}{d|\theta^s|}\right)_t\quad&\text{if } t\in \bar B^C
	\end{cases}
\end{align*}
and $\mu^\theta=\mu+|\theta^s|$ so that 
\[
\int_{[0,T]} x_td\theta_t-\int_{[0,T]} h_t(x_t)d\mu_t=\int_{[0,T]} -\bar h_t(x_t)d\mu^\theta_t.
\]
The space $BV$ is PCU-stable in the sense that, for any $(x^i)_{i=0}^n\subset BV$ and for any continuous partition of unity $(\alpha^i)_{i=0}^n$ such that $\alpha^0\in C^\infty(\R;[0,1])$, $\alpha^i\in C_c^\infty(\R;[0,1])$, we have $\sum_{i=0}^n \alpha^i x^i\in BV$. By extending $\bar h$ to $\R\times\R^d$ by zero we get, by \cite[Theorem 1]{bv88}, that
\begin{align}\label{eq:bv0}
	\inf_{x\in BV} \int_{[0,T]} \bar h_t(x_t)d\mu^\theta_t &=\int_{[0,T]}\inf_{x\in\Gamma_t} \bar h_t(x)d\mu^\theta_t,
\end{align}
where $\Gamma=\mu^\theta\text{-}\esssup_{x\in BV\cap\dom I_h}x$.

Let us prove that $\Gamma_t=\mli\dom h_t$ $\mu^\theta$-a.e., where $\mli$ is with respect to $\tau_l$. By Proposition~\ref{prop:mli} and Theorem~\ref{thm:mlim0}, $x_t\in\mli\dom h_t$ for all $t$ whenever $x\in BV\cap\dom I_h$. Therefore it suffices to show that $\mli\dom h$ is smaller than the essential supremum. 

Let $\bar x\in BV$ and $\bar r>0$ be such that $\B(\bar x_t,\bar r)\in\dom h_t$ $\mu$-a.e. By Proposition~\ref{prop:mli}, $\mli\dom h$ is left-inner semicontinuous solid convex-valued. Let $t\in\R$ and $\hat x\in\interior\mli\dom h_t$. It follows from \cite[Theorem 5.9]{rw98} that there is an interval $(s,t]$ and $\hat r>0$ such that  $\B(\hat x,\hat r)\subset\interior\mli\dom h_{t'}$ for all $t'\in(s,t]$. By Proposition~\ref{prop:mli}, $\B(\hat x,\hat r)\subset\dom h_{t'}$ $\mu$-a.e. on $(s,t]$ so that there is an $r<\hat r$ such that $x_t=\bar x_t1_{[0,s]}+\hat x1_{(s,t]}+\bar x_t1_{(t,T]}$ satisfies $\B(x_t,r)\subseteq \dom h_t$ $\mu$-a.e. Therefore, for any $t$ and $\hat x\in\interior\mli\dom h_t$ there is $x\in BV\cap\dom I_h$ with $x_t=\hat x$. This implies that, for any $t$ and $x'\in\R^d$, 
\begin{equation}\label{eq:bv1}
d(x',\mli\dom h_t)=\inf\left\{|x'-x_t|\mid x\in BV\cap\dom I_h\right\}.
\end{equation}
Let $D$ be a countable dense set in $\R^d$. As in the proof of \cite[Theorem 1.3]{wan01}, there is a sequence $(x^\nu)_{\nu=1}^\infty\subset BV\cap\dom I_h$ such that $(\mu^\theta\text{-}\esssup_{x\in BV\cap\dom I_h} x)_t=\cl\bigcup_{\nu=1}^\infty x^\nu_t$ $\mu^\theta$-a.e. and, for every $x'\in D$,
\begin{align*}
	\inf\left\{|x'-x_t|\mid x\in BV\cap\dom I_h\right\} \geq \inf\left\{|x'-x^\nu_t|\mid \nu\geq 1\right\}\quad \mu^\theta\text{-a.e.}
\end{align*}
This together with \eqref{eq:bv1} implies that $\mli\dom h_t\subseteq ({\mu^\theta\text{-}\esssup}_{x\in BV\cap\dom I_h} x)_t$ $\mu^\theta$-a.e.

Since $\mli\dom h_t=\Gamma_t$ $\mu^\theta$-a.e., and $I_{\bar h}^\theta$ is proper on $BV$, we get, by \eqref{eq:bv0}, that
\begin{align*}
\sup_{x\in BV}\left\{\int_\R x_td\theta_t-\int_{\R} h_t(x_t)d\mu_t\right\} &=-\inf_{x\in BV} \int_\T \bar h_t(x_t)d\mu^\theta_t\\
 &=-\int_\T\inf_{x\in\mli\dom h_t} \bar h_t(x)d\mu^\theta_t.
\end{align*}

Assuming that $\dom h$ is left-outer $\mu$-regular, similarly to \eqref{eq:barh}, the above expression implies the claim. Assume now that $\dom h$ is not left-outer $\mu$-regular. By \eqref{eq:inc} and the fact that $(h_t^*)^\infty(v)=\sup\{v\cdot x\mid x\in \dom h_t\}$ (see \cite[Theorem 13.3]{roc70a}), there is $\bar t$ and $\bar v\in\R^d$ such that $\sup\{\bar v\cdot x\mid x\in \mli\dom h_{\bar t}\}>(h_{\bar t}^*)^\infty(\bar v)$. Here $\mu$ cannot have an atom at $\bar t$, otherwise $\{\bar t\}\in\mH^{\mu\#}_{\bar t}$ and $\mli\dom h_{\bar t}\subseteq\cl\dom h_{\bar t}$. Let $\theta=\bar \theta+\bar v\delta_{\bar t}$, where $\delta_{\bar t}$ denotes a Dirac measure at $\bar t$ and $\bar\theta$ is absolutely continuous with respect to $\mu$ with $d\bar\theta/d\mu=w$. By Proposition~\ref{prop:mli}, $\cl\dom h_t=\mli\dom h_t$ $\mu$-a.e. so that
\begin{align*}
	\sup_{x\in BV}\left\{\int_\R x_td\theta_t-\int_{\R} h_t(x_t)d\mu_t\right\} &=-\int_\T\inf_{x\in\mli\dom h_t} \bar h_t(y)d\mu^\theta_t\\
	&=\int_\T h_t^*\left(\left(\frac{d\theta}{d\mu}\right)_t\right)d\mu_t+\sup_{x\in\mli\dom h_{\bar t}} \bar v\cdot x\\
	&> J_{h^*}(\theta).
\end{align*}
\end{proof}

\bibliographystyle{spmpsci} 
\bibliography{sp}
\end{document}